\documentclass[10pt]{article}
\usepackage{amsfonts,amssymb,amsmath,amsthm,epsfig,euscript,verbatim,graphicx}
\usepackage{textcomp}
\usepackage{enumerate}
\usepackage{epsfig}
\usepackage{tikz}
\usepackage{subcaption}
\usepackage[autostyle]{csquotes}
\usepackage{lmodern}
\usepackage[T1]{fontenc}

\usepackage{palatino}
\usepackage{mathpazo}

\begingroup
    \makeatletter
    \@for\theoremstyle:=definition,remark,plain\do{%
        \expandafter\g@addto@macro\csname th@\theoremstyle\endcsname{%
            \addtolength\thm@preskip\parskip
            }%
        }
\endgroup

\setlength{\textwidth}{6.3in}
\setlength{\textheight}{8.7in}
\setlength{\topmargin}{0pt}
\setlength{\headsep}{0pt}
\setlength{\headheight}{0pt}
\setlength{\oddsidemargin}{0pt}
\setlength{\evensidemargin}{0pt}
\newtheorem{theorem}{Theorem}
\newtheorem{lemma}[theorem]{Lemma}

\newtheorem{conjecture}{Conjecture}

%\def\mn{\makebox[1.1ex]{\rule[.58ex]{.71ex}{.15ex}}}

%\long\def\symbolfootnote[#1]#2{\begingroup
%\def\thefootnote{\fnsymbol{footnote}}\footnote[#1]{#2}\endgroup}

% Page dimensions:
\setlength{\textwidth}{6in}
\setlength{\textheight}{8in}
\setlength{\topmargin}{0in}
\setlength{\headsep}{0.25in}
\setlength{\headheight}{0.25in}
\setlength{\oddsidemargin}{0.25in}
\setlength{\evensidemargin}{0.25in}
\makeatletter
\newfont{\footsc}{cmcsc10 at 8truept}
\newfont{\footbf}{cmbx10 at 8truept}
\newfont{\footrm}{cmr10 at 10truept}
\pagestyle{plain}

\renewenvironment{abstract}{
	\begin{list}{}%
	{\setlength{\rightmargin}{1in}%
	\setlength{\leftmargin}{1in}}%
	\item[]\ignorespaces\begin{small}}%
	{\end{small}\unskip\end{list}%
}

% newcommands to be used in math mode

\newcommand{\sds}[2]{%
	\begin{tikzpicture}[baseline=(current bounding box.south),scale={#2}]
		\foreach[count=\x] \i in {#1}{
			\foreach \j in {1,...,\i}{
				\draw[gray!85!blue, thick, fill=gray!20] (\x, \j) rectangle (\x+1,\j+1);
			}
			\node at (\x+.5,.6) {\scriptsize $\i$};
		}
	\end{tikzpicture}%
}
\newcommand{\sdsplus}[3]{%
	\begin{tikzpicture}[baseline=(current bounding box.south),scale={#2}]
		\foreach[count=\x] \i in {#1}{
			\foreach \j in {1,...,\i}{
				\draw[gray!85!blue, thick, fill=gray!20] (\x, \j) rectangle (\x+1,\j+1);
			}
			\node at (\x+.5,.6) {\scriptsize $\i$};
		}
		{#3}
	\end{tikzpicture}%
}

\newcommand{\clusterOne}{%
\begin{tikzpicture}[every node/.style={inner sep=0,outer sep=0},baseline=-0.75ex]
	\node at (0,0) {524433222};	
	\draw[thin] (-0.8, 0.16) -- (-0.12, 0.16);
	\draw[thin] (-0.44, 0.20) -- (0.24, 0.20);
	\draw[thin] (.1, 0.24) -- (0.84, 0.24);
\end{tikzpicture}%
}

\newcommand{\clusterTwo}{%
\begin{tikzpicture}[every node/.style={inner sep=0,outer sep=0},baseline=-0.75ex]
	\node at (0,0) {524433222};	
	\draw[thin] (-0.8, 0.16) -- (-0.12, 0.16);
	\draw[thin] (-0.44, 0.20) -- (0.24, 0.20);
	\draw[thin] (-.28, 0.24) -- (0.46, 0.24);
	\draw[thin] (.1, 0.28) -- (0.84, 0.28);
\end{tikzpicture}%
}

\newcommand{\clusterThree}{%
\begin{tikzpicture}[every node/.style={inner sep=0,outer sep=0},baseline=-0.75ex]
	\node at (0,0) {313223122};	
	\draw[thin] (-0.8, 0.16) -- (-0.12, 0.16);
	\draw[thin] (-0.44, 0.20) -- (0.24, 0.20);
	\draw[thin] (.1, 0.24) -- (0.84, 0.24);
\end{tikzpicture}%
}

\newcommand*\patchAmsMathEnvironmentForLineno[1]{%
  \expandafter\let\csname old#1\expandafter\endcsname\csname #1\endcsname
  \expandafter\let\csname oldend#1\expandafter\endcsname\csname end#1\endcsname
  \renewenvironment{#1}%
     {\linenomath\csname old#1\endcsname}%
     {\csname oldend#1\endcsname\endlinenomath}}% 
\newcommand*\patchBothAmsMathEnvironmentsForLineno[1]{%
  \patchAmsMathEnvironmentForLineno{#1}%
  \patchAmsMathEnvironmentForLineno{#1*}}%
\AtBeginDocument{%
\patchBothAmsMathEnvironmentsForLineno{equation}%
\patchBothAmsMathEnvironmentsForLineno{align}%
\patchBothAmsMathEnvironmentsForLineno{flalign}%
\patchBothAmsMathEnvironmentsForLineno{alignat}%
\patchBothAmsMathEnvironmentsForLineno{gather}%
\patchBothAmsMathEnvironmentsForLineno{multline}%
}

% ======================================================

\usepackage{mathtools}
\usepackage{parskip}
\usepackage{lineno}
\usepackage[small]{titlesec}

%\linenumbers

\usepackage[square,comma,numbers,sort&compress]{natbib}

\renewcommand{\P}{\mathbb{P}}

\newpagestyle{main}[\small]{
	\headrule
	\sethead[\usepage][][]
	{\sc Shift Equivalence in the Generalized Factor Order}{}{\usepage}
}

\setlength{\parindent}{0pt}
\setlength{\parskip}{1.5ex}

\title{\sc Shift Equivalence in the\\Generalized Factor Order}
%\author{\centering
%\begin{tabular}{ccc}
%Jennifer Fidler
%&\rule{0pt}{0pt}&
%Daniel Glasscock \\[-0.25ex]
%%
%\small The Cottesloe School
%&&
%\small Department of Mathematics \\[-0.5ex]
%%
%\small Wing, UK
%&&
%\small Ohio State University \\[-0.5ex]
%%
%\small  
%&&
%\small Columbus, OH \\[2.0ex]
%Brian K. Miceli
%&\rule{0pt}{0pt}&
%Jay Pantone\\[-0.25ex]
%%
%\small Department of Mathematics
%&&
%\small Department of Mathematics\\[-0.5ex]
%%
%\small Trinity University
%&&
%\small Dartmouth College\\[-0.5ex]
%%
%\small San Antonio, TX
%&&
%\small Hanover, NH\\[2.0ex]
%Jay Pantone
%&\rule{0pt}{0pt}&
%Min Xu\\[-0.25ex]
%%
%\small Department of Mathematics
%&&
%\small School of Computer Science\\[-0.5ex]
%%
%\small Dartmouth College
%&&
%\small Carnegie Mellon University\\[-0.5ex]
%%
%\small Hanover, NH
%&&
%\small Pittsburgh, PA\\[-1.5ex]
%\end{tabular}}
\author{
	Jennifer Fidler\\ \small The Cottesloe School\\ \small Wing, UK
	\and
	Daniel Glasscock\\ \small Department of Mathematics\\ \small Ohio State University\\ \small Columbus, OH
	\and	
	Brian Miceli\\ \small Department of Mathematics\\ \small Trinity University\\ \small San Antonio, TX
	\and
	Jay Pantone\\ \small Department of Mathematics\\ \small Dartmouth College\\ \small Hanover, NH
	\and
	Min Xu\\ \small School of Computer Science\\ \small Carnegie Mellon University\\ \small Pittsburgh, PA
}

\titleformat{\section}{\large\sc}{\thesection.}{1em}{}
\date{}

\begin{document}
\maketitle

\begin{abstract}
We provide a geometric condition that guarantees strong Wilf equivalence in the generalized factor order. This provides a powerful tool for proving specific and general Wilf equivalence results, and several such examples are given.
\end{abstract}

\section{Introduction}
\label{sec:intro}

We say that a nonempty word $u = u_1u_2\ldots u_k$ is a \emph{factor} of a word $w$ if the letters of $u$ appear consecutively in $w$, i.e., if $w = w^{(1)}uw^{(2)}$ for some words $w^{(1)}$ and $w^{(2)}$. More generally, given a poset $P$ and words $u$ and $w$ whose letters are from $P$, we say that $u$ is a \emph{generalized factor} of $w$ if $w = w^{(1)}vw^{(2)}$ for a word $v$ of the same length of $u$ with the property that $u_i \leq_P v_i$ for all $i$. Each such $v$ is called an \emph{embedding of} $u$ \emph{in} $w$, and if no such embeddings exist, then we say that $w$ \emph{avoids} $u$. The poset induced by the generalized factor relation is called the \emph{generalized factor order over $P$}. 

This paper is concerned only with the case where $P=\P$, the positive integers with the usual order. For a word $w \in \P^*$, we define $|w|$ to be the length of $w$ and $\|w\|$ to be the sum of the letters of $w$. Kitaev, Liese, Remmel, and Sagan~\cite{KLRS} introduced the generalized factor order over $\P$ and defined two words $u$ and $v$ to be \emph{Wilf equivalent} if the number of words of length $n$ with sum $m$ that avoid $u$ is the same as the number of words of length $n$ and sum $m$ that avoid $v$, for all $n$ and $m$. Defining the generating function
\[
	F_u(x,y) = \sum_{\mathclap{\substack{w \,:\, u\text{ not a}\\\text{gen. fact. of $w$}}}} \;x^{|w|}y^{\|w\|},
\]
we see that $u$ and $v$ are Wilf equivalent if and only if $F_u(x,y) = F_v(x,y)$. 

%Let a word $w$ be comprised of letters $w_1,w_2,w_3, \ldots \in \bbn = \{1,2,3,\ldots\}$, so that any word is simply a string of positive integers (which can be thought of as a composition), and define $\bbn^*=\{w=w_1w_2\cdots w_n \ | \ n \geq 0 \mbox{ and } w_i \in \bbn \mbox { for all } i \}$. Let $|w|$ denote the number of letters in $w$, called the \emph{length of w}, and let $\sm{w}= \sum_{i=1}^{|w|} w_i$. We define the \emph{weight of w} to be wt$(w)$ = $x^{|w|}y^{\sm{w}}$.  In the case where $n = 0$, we define $w = \epsilon$ to be the empty word with wt$(\epsilon)$ = 1. 

%Kitaev et al \cite{KLRS} define that $w$ \emph{embeds} $u$, written as $u \leq w$, if there is a string, $v$, of $|u|$ consecutive letters in $w$ such that for all $1 \leq i \leq |u|$, $u_i \leq v_i$, and we define $e_{u,w}$ to be the number of embeddings of $u$ in $w$. For example, if $u=154$ and $w=16563$, then $e_{u,w} = 2$ since $w$ embeds $u$ in two places. By setting 
%\[
%	A_u(x,y,z) = \sum_{w \in \bbn^*} \mbox{wt}(w)z^{e_{u,w}},
%\]
%
%we say that two words, $u$ and $v$, are \emph{Wilf equivalent}, denoted $u \sim v$, if $\mA_u(x,y,0) = \mA_v(x,y,0).$ It is worth noting here that Wilf equivalent words $u$ and $v$ must have equal weights, otherwise the ``smallest" term in $\mA_u(x,y,0)$ differs from that in $\mA_v(x,y,0)$.  

The notion of Wilf equivalence can be refined. Define $\eta_u(w)$ to be the number of embeddings of $u$ in $w$. For example, if $u = 154$ then $\eta_u(16563) = 2$, because both $165$ and $656$ are embeddings of $u$ in $w$. Define
\[
	A_u(x,y,z) = \sum_{\mathclap{w \in \P^*}}x^{|w|}y^{\|w\|}z^{\eta_u(w)}.
\]
Following Pantone and Vatter~\cite{PV}, we say that $u$ and $v$ are \emph{strongly Wilf equivalent} if
\[
	A_u(x,y,z) = A_v(x,y,z).
\]
Note that $F_u(x,y) = A_u(x,y,0)$, and so $u$ and $v$ are Wilf equivalent if and only if $A_u(x,y,0) = A_v(x,y,0)$. 

Generalizations of this notion of embedding can be found in the works of Chamberlain, Cochran, Ginsburg, Miceli, Riehl, and Zhang~\cite{MR} and Langley, Liese, and Remmel \cite{LLR, LLR*}, while Hadjiloucas, Michos, and Savvidou~\cite{HMS} investigate the similar notion of super-strong Wilf equivalence. Various results about the Wilf equivalence of families of words are known (see~\cite{MR, KLRS, LLR, LLR*}), yet a tantalizing conjecture from~\cite{KLRS} remains open.

\begin{conjecture}[Rearrangment Conjecture,~\cite{KLRS}] If $u$ and $v$ are Wilf equivalent, then $u$ and $v$ are rearrangements of each other. That is, they have the same multiset of letters. \label{conj:rearrange}
\end{conjecture}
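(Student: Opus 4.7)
My plan is to try to extract the multiset of letters of $u$ directly from the generating function $F_u(x,y)$, or equivalently from $G_u(x,y) := \sum_w x^{|w|}y^{\|w\|} - F_u(x,y)$, which enumerates words \emph{containing} $u$. The easy invariants come first: among words of length exactly $|u|$, the containing words are precisely those $w \geq u$ componentwise, and so the length-$|u|$ stratum of $G_u$ equals $x^{|u|}y^{\|u\|}/(1-y)^{|u|}$. Reading off the minimum $x$-degree of $G_u$ recovers $|u|$, and then the minimum $y$-degree of that stratum recovers $\|u\|$.

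The next step is to bring in the self-overlap structure of $u$, since this is where the multiset of its letters must first enter. Expanding $[\eta_u(w) \geq 1] = \sum_{k \geq 1}(-1)^{k-1}\binom{\eta_u(w)}{k}$ yields
\[
	G_u(x,y) = \sum_{k \geq 1}(-1)^{k-1} S_u^{(k)}(x,y), \qquad S_u^{(k)}(x,y) = \sum_{w}\binom{\eta_u(w)}{k}\, x^{|w|}y^{\|w\|}.
\]
A direct factorization over decompositions $w = w^{(1)} v w^{(2)}$ with $v\geq u$ gives
\[
	S_u^{(1)}(x,y) = \left(\frac{1-y}{1-y-xy}\right)^{2} \cdot \frac{x^{|u|}y^{\|u\|}}{(1-y)^{|u|}},
\]
which depends only on $|u|$ and $\|u\|$. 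All finer data therefore lives in the $S_u^{(k)}$ with $k \geq 2$. The $k=2$ term decomposes by the offset $d$ of two embeddings, and for each $d$ the minimum-weight joint configuration records the sum $\sum_{i} \max(u_i,u_{i-d})$, which clearly depends on more than just $|u|$ and $\|u\|$. My hope would be to recover $\max(u)$ from one such extremal statistic, then strip off the largest letter and induct on $|u|$.

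The main obstacle is that the offset-max data is subtle and not directly determined by the multiset: already the words $12$ and $22$ share the same single pairwise maximum, so the multiset cannot be read off from a single offset and must be synthesized across all offsets $1 \leq d < |u|$ simultaneously, and that synthesis has to survive the alternating cancellation in the sum over $k$. This is exactly why the conjecture has remained open in full generality. I would expect a successful attack to require either a sharp extremal argument---isolating a particular monomial $x^n y^m$ in $G_u$ whose coefficient admits a clean combinatorial reading of $\max(u)$ before cancellation sets in---or a reduction to a symmetric-function identity forcing the multiset of $u$ to be reconstructible from the joint data of the $S_u^{(k)}$; I would regard the latter route as the more promising, though also the more technically demanding.
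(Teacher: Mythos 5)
The statement you have been asked to prove is presented in the paper as a \emph{conjecture} --- the Rearrangement Conjecture of Kitaev, Liese, Remmel, and Sagan --- and the paper explicitly says it remains open. There is no proof in the paper to compare against. The closest known result, cited in the introduction, is the strictly weaker theorem of Pantone and Vatter that \emph{strongly} Wilf equivalent words must be rearrangements of each other; the paper's own contribution runs in the other direction entirely, giving a sufficient geometric condition (shift equivalence) for strong Wilf equivalence.

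Your proposal does not close this gap. The steps you actually carry out are correct but recover only the known, easy invariants: the minimal $x$-degree of $G_u$ gives $|u|$, the minimal $y$-degree of that stratum gives $\|u\|$, and your formula for $S_u^{(1)}$ is right but, as you yourself observe, depends only on these two quantities. The entire content of the conjecture lies in extracting the full multiset of letters from the higher terms $S_u^{(k)}$ with $k \geq 2$, and at exactly that point the argument becomes a statement of intent (``my hope would be to recover $\max(u)$,'' ``I would expect a successful attack to require\ldots''). You correctly identify the obstruction --- the offset data $\sum_i \max(u_i, u_{i-d})$ is not transparently equivalent to the multiset, and whatever is extracted must survive the alternating cancellation over $k$ --- but identifying the obstruction is not the same as overcoming it. As written this is a reasonable research plan for an open problem, not a proof; if you could complete the reconstruction step, you would have resolved the conjecture itself, which neither this paper nor the literature it cites has done.
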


The converse of this statement is false; it is straightforward to compute $A_u(x,y,z)$ for any particular $u$ and so it can be verified that $A_{132}(x,y,0) \neq A_{312}(x,y,0)$. 

Recent work of Pantone and Vatter~\cite{PV} uses the Cluster Method to prove a weakening of the Rearrangement Conjecture. They show that if $u$ and $v$ are strongly Wilf equivalent, then they must be rearrangements of each. Moreover, computational evidence led them to conjecture that, surprisingly, Wilf equivalence and strong Wilf equivalence are the same condition. That is, they conjecture that $u$ and $v$ are Wilf equivalent if and only if they are strongly Wilf equivalent.

%Further, in recent work by Pantone and Vatter \cite{PV}, the Goulden-Jackson Cluster Method \cite {GJ} is used to show in the case that $u \approx v$, $u$ and $v$ must be rearrangements; further, it is conjectured that if $u \sim v$, then $u \approx v$. We give a brief summary of the cluster method in Section~\ref{sec:WE}, but for an extremely well-written description of the method along with other applications to consecutive pattern avoidance, we refer the reader to a paper of Elizalde and Noy \cite{EN}.

Herein, we prove a geometric result that partially classifies when two words are strongly Wilf equivalent. With this tool in hand, we are able to prove in a unified way conjectures from~\cite{KLRS} and reprove several theorems from~\cite{KLRS} and~\cite{PV}.

%%%%%%%%%
%%%%%%%%%
%%%%%%%%%
%%%%%%%%%

\section{Cluster Method}
\label{section:cluster}

The Cluster Method of Goulden and Jackson~\cite{GJ} is a framework that can be used to study consecutive pattern avoidance for many types of combinatorial objects. We will present a brief outline of its application to the generalized factor order; we refer the reader to~\cite{PV} for a more detailed explanation in this context, and to~\cite{EN} for an application to permutation pattern avoidance.

Given a nonempty word $u$ over $\P$, an \emph{$m$-cluster of $u$} is a word $c$ together with $m$ marked occurrences of $u$ in $c$ such that every letter of $c$ is part of some marked occurrence and each consecutive pair of occurrences overlaps in at least one position. For example, \clusterOne{} is a $3$-cluster of $3122$, while \clusterTwo{} is a $4$-cluster of $3122$ with the same underlying word but one additional marking. 

Define the \emph{cluster generating function} of $u$ by
\[
	C_u(x,y,z) = \sum_{m\ge 1} z^m\; \sum_{\mathclap{\substack{\text{$m$-clusters}\\\text{$c$ of $u$}}}} x^{|c|}y^{\|c\|}.
\]
It follows that the generating function $A_u(x,y,z)$, which counts words by length, sum, and the number of occurrences of $u$, can be derived from $C_u(x,y,z)$ via the formula
\[
	A_u(x,y,z) = \frac{1}{1-\frac{xy}{1-y} - C_u(x,y,z-1)}.
\]
Therefore, $u$ and $v$ are strongly Wilf equivalent if and only if $C_u(x,y,z) = C_v(x,y,z)$ and Wilf equivalent if and only if $C_u(x,y,-1) = C_v(x,y,-1)$. Hence one can prove Wilf equivalence results directly using cluster generating functions.

We make one further simplification. A \emph{minimal $m$-cluster of $u$} is an $m$-cluster of $u$ such that no letter can be decreased without destroying a marked occurrence of $u$. The example \clusterOne{} above is not a minimal $3$-cluster of $3122$, but \clusterThree{} is. Define the \emph{minimal cluster generating function} of $u$ by
\[
	M_u(x,y,z) = \sum_{m\ge 1} z^m\; \sum_{\mathclap{\substack{\text{minimal}\\ \text{$m$-clusters}\\\text{$c$ of $u$}}}} x^{|c|}y^{\|c\|}.
\]
The generating functions for clusters and minimal clusters are related by the equality
\[
	C_u(x,y,z) = M_u\left(\frac{x}{1-y}, y, z\right),
\]
and therefore $u$ and $v$ are strongly Wilf equivalent if and only if $M_u(x,y,z) = M_v(x,y,z)$. It follows that one can prove that two words $u$ and $v$ are strongly Wilf equivalent by exhibiting a bijection between minimal clusters of $u$ and minimal clusters of $v$ that preserves length, sum, and number of marked occurrences: this is the technique used in the main result of the next section.

%%%%%%%%%
%%%%%%%%%
%%%%%%%%%
%%%%%%%%%

\section{Rigid Shifts and Strong Wilf equivalence}

The \emph{skyline diagram} of a word $u = u_1u_2 \ldots u_n \in \P^*$ of length $n$ is the geometric figure formed by adjoining $n$ columns of squares such that the $i$th column is made up of $u_i$ squares. For this definition a picture proves more useful: Figure~\ref{figure:skyline-diagram-example} shows the skyline diagrams of $241625$ and $122213132$.

\begin{figure}
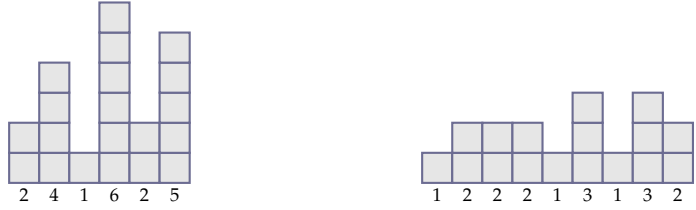

\begin{center}
	\ \hfill
	\sds{2,4,1,6,2,5}{0.4}
	\hfill
	\sds{1,2,2,2,1,3,1,3,2}{0.4}
	\hfill\ 
	
	\caption{On the left, the skyline diagram of $241625$. On the right, the skyline diagram of $122213132$.}
	\label{figure:skyline-diagram-example}
\end{center}
\end{figure}

With this perspective, one can think of a minimal $m$-cluster of $u$ as $m$ overlapped copies of the skyline diagram of $u$ that together create a larger skyline diagram. For example, the minimal $3$-cluster \clusterThree{} of $3122$ is shown in Figure~\ref{figure:skyline-minimal-cluster}.

\begin{figure}
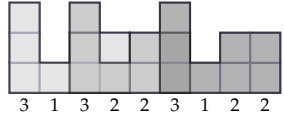

\begin{center}
	\sdsplus{3,1,3,2,2,3,1,2,2}{0.4}{
	
		\draw[thick, fill=gray!20, opacity=0.5] (1,1)--(1,4)--(2,4)--(2,2)--(3,2)--(3,3)--(5,3)--(5,1)--cycle;
		\begin{scope}[shift={(2,0)}]
			\draw[thick, fill=gray!60, opacity=0.5] (1,1)--(1,4)--(2,4)--(2,2)--(3,2)--(3,3)--(5,3)--(5,1)--cycle;		
		\end{scope}
		\begin{scope}[shift={(5,0)}]
			\draw[thick, fill=gray, opacity=0.5] (1,1)--(1,4)--(2,4)--(2,2)--(3,2)--(3,3)--(5,3)--(5,1)--cycle;		
		\end{scope}
	
	}
	\caption[Three overlaid copies of the skyline diagram of $3122$ starting in positions $1$, $3$, and $6$. The resulting skyline diagram is that of the minimal $3$-cluster $313223122$ of $3122$.]{Three overlaid copies of the skyline diagram of $3122$ starting in positions $1$, $3$, and $6$. The resulting skyline diagram is that of the minimal $3$-cluster \clusterThree{} of $3122$.}
	\label{figure:skyline-minimal-cluster}
\end{center}
\end{figure}

A \emph{rigid shift} of a word $u$ is any word $v$ that can be formed by cutting the skyline diagram of $u$ at some height $h$ and rigidly moving together all blocks above the cut line in such a way that each moved column comes to rest on a column of height exactly $h$. To illustrate, consider the word $u = 2233213452$. Figure~\ref{figure:skyline-rigid-shift-1} shows the word $u$ and two rigid shifts of $u$, while Figure~\ref{figure:skyline-rigid-shift-2} demonstrates three deformations of $u$ that are not rigid shifts. 

\begin{figure*}
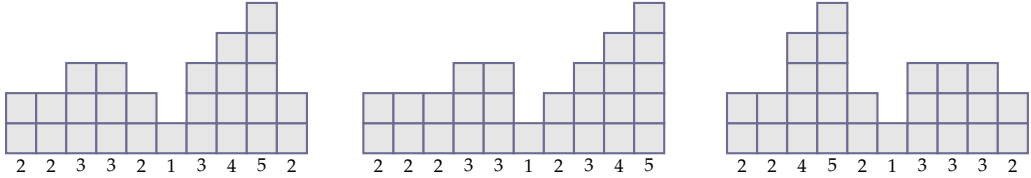

	\centering
	\begin{subfigure}[t]{0.3\textwidth}
		\centering
		\sds{2,2,3,3,2,1,3,4,5,2}{0.4}
%		\caption{Lorem ipsum}
	\end{subfigure}%
	~ 
	\begin{subfigure}[t]{0.3\textwidth}
		\centering
		
		\sds{2,2,2,3,3,1,2,3,4,5}{0.4}

%		\caption{Lorem ipsum}
	\end{subfigure}
	~ 
	\begin{subfigure}[t]{0.3\textwidth}
		\centering
		
		\sds{2,2,4,5,2,1,3,3,3,2}{0.4}	
		
%		\caption{Lorem ipsum}
	\end{subfigure}
	
	\caption{On the left, the skyline diagram of $u=2233213452$. In the middle, a rigid shift performed by cutting the diagram at height $2$ and shifting all blocks above height $2$ one column to the right. On the right, a rigid shift performed by cutting the diagram at height $3$ and shifting all blocks above height $3$ five columns to the left.}
	
	\label{figure:skyline-rigid-shift-1}
\end{figure*}

\begin{figure*}[t]
	\centering
	\begin{subfigure}[t]{0.3\textwidth}
		\centering
		\sds{2,2,3,3,2,3,4,5,2,1}{0.4}
	\end{subfigure}%
	~ 
	\begin{subfigure}[t]{0.3\textwidth}
		\centering
		
		\begin{tikzpicture}[baseline=(current bounding box.south),scale={0.4}]
			\foreach[count=\x] \i in {2,3,3,2,1,3,4,5,2,1}{
				\foreach \j in {1,...,\i}{
					\draw[gray!85!blue, thick, fill=gray!20] (\x, \j) rectangle (\x+1,\j+1);
				}
				\node at (\x+.5,.6) {\scriptsize $\i$};
			}
			\draw[gray!85!blue, thick, fill=gray!20] (0, 2) rectangle (1,3);
			\node at (0.5, 0.6) {\scriptsize ?};
		\end{tikzpicture}

	\end{subfigure}
	~ 
	\begin{subfigure}[t]{0.3\textwidth}
		\centering
		
		\begin{tikzpicture}[baseline=(current bounding box.south),scale={0.4}]
			\foreach[count=\x] \i in {2,3,3,2,2,1,4,5,2,2}{
				\foreach \j in {1,...,\i}{
					\draw[gray!85!blue, thick, fill=gray!20] (\x, \j) rectangle (\x+1,\j+1);
				}
				\node at (\x+.5,.6) {\scriptsize $\i$};
			}
			\draw[gray!85!blue, thick, fill=gray!20] (6, 3) rectangle (7,4);
			\draw[white,fill=white] (6,0) rectangle (7,0.8);
			\node at (6.5, 0.6) {\scriptsize ?};
		\end{tikzpicture}

	\end{subfigure}
	
	\caption{Three deformations of $u=2233213452$ that are not rigid shifts. On the left, only some, but not all, of the blocks of height at least two have been shifted. In the middle, all blocks of height at least two have been shifted to the left by one column, causing an illegal overhang. On the right, all blocks of height at least three have been shifted to the left by one column, creating a unconnected column of blocks.}
	
	\label{figure:skyline-rigid-shift-2}
\end{figure*}
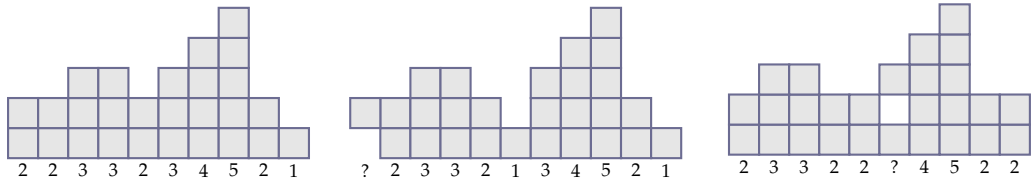

The \emph{shift equivalence class} of a word  $u$ is the set of all words that can be obtained by starting with $u$ and performing any sequence of reversals and rigid shifts. By reversals, we mean reversing the order of the letters in the entire word, not just in some subword. We will show in this section that the shift equivalence relation is a refinement of the strong Wilf equivalence relation; that is, any two shift equivalent words must also be strongly Wilf equivalent. The concreteness and geometric nature of the shift equivalence relation is thus a powerful tool for studying Wilf equivalence in the generalized factor order.

\begin{theorem}
	Any two shift equivalent words are strongly Wilf equivalent.
\end{theorem}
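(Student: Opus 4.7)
The plan is to use the framework of Section~\ref{section:cluster}: since $u$ and $v$ are strongly Wilf equivalent if and only if $M_u(x,y,z) = M_v(x,y,z)$, it suffices to exhibit, for each generator of the shift equivalence relation, a bijection between minimal $m$-clusters of $u$ and minimal $m$-clusters of $v$ that preserves length, sum, and the number $m$ of marked occurrences. Since shift equivalence is generated by reversals and single rigid shifts, I would handle these two cases separately. Reversals are immediate: reversing a minimal $m$-cluster of $u$ gives a minimal $m$-cluster of the reversal $u^R$, with all three statistics visibly preserved.

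For the substantive case, suppose $v$ is obtained from $u$ by the rigid shift at height $h$ with horizontal shift $s$. Given a minimal $m$-cluster $c$ of $u$ with marked occurrences at positions $p_1 < \cdots < p_m$, I would apply the \emph{same} rigid shift (at height $h$, by $s$) to the entire skyline diagram of $c$ while keeping the starting positions $p_1,\ldots,p_m$ fixed. The key observation that drives everything is this: any column $j$ of $c$ with height exceeding $h$ must arise from some copy $i$ with $u_{j-p_i+1} > h$, and the validity of the rigid shift on $u$ alone forces $u_{j-p_i+1+s} = h$ exactly. This simultaneously shows that $j+s$ still lies within the support of $c$ and that the skyline of $c$ at $j+s$ is at least $h$, so the destination column has base height exactly $h$ after the height-$h$ cut; in other words, the shift is well-defined on $c$. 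A short case analysis, depending on whether each index $\ell$ within a copy lies in $T_u$, in $T_u + s$, in both, or in neither, then shows that the shifted skyline agrees column by column with the pointwise maximum of $m$ copies of $v$ placed at positions $p_1,\ldots,p_m$, so it is a minimal $m$-cluster of $v$ with the same overlap pattern.

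The preserved statistics follow at once: rigid shifts preserve length and sum by construction, the number of marked occurrences is unchanged, and the inverse map is the rigid shift at height $h$ by $-s$, valid on minimal clusters of $v$ by the symmetric argument. The main obstacle I anticipate is the delicate step within the case analysis: for columns $j \notin T_c + s$, one must rule out that any copy $i$ covering $j$ has $j-p_i+1 \in T_u+s$, since otherwise the pointwise max of copies of $v$ would exceed $h$ at $j$ and disagree with the truncated base. Fortunately, this is exactly the contrapositive of the key observation above. Iterating over the generators of the shift equivalence class of $u$ then gives $M_u = M_v$ and hence strong Wilf equivalence.
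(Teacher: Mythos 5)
Your proof is correct and follows essentially the same route as the paper's: the bijection fixes the marked starting positions and applies the identical rigid shift (height $h$, offset $s=k$) to the entire cluster, with your ``key observation'' playing exactly the role of the paper's Lemma~\ref{lemma:shift-prop} --- the only quibble being that validity of the shift on $u$ forces the destination column to have height \emph{at least} $h$, not ``$=h$ exactly,'' though your next clause about the truncated base shows you intend the correct statement. The only difference is one of packaging: the paper defines $\Phi(c)$ as the unique minimal $m$-cluster of $v$ with the same marked positions and then verifies via a single $\max$/$\min$ identity that it equals the shifted $c$, whereas you define $\Phi(c)$ as the shifted $c$ and verify through your case analysis that it is that minimal cluster; these are the same argument read in opposite directions.
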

\begin{proof}
	First we prove that a word and its reverse are strongly Wilf equivalent. Although this case is fairly trivial, it serves as a nice warm up for the latter part of the proof. Let $u^R$ denote the reverse of $u$. We will show that $u$ and $u^R$ are strongly Wilf equivalent by exhibiting a bijection between minimal clusters of $u$ and minimal clusters of $u^R$ that preserves length, sum, and the number of marked occurrences. Let $\Phi$ be the map from $m$-clusters of $u$ to $m$-clusters of $u^R$ defined by $\Phi(c) = c^R$, where in $c^R$ the placement of the markings is correspondingly reversed. As this is clearly a bijection with the desired properties, $M_u(x,y,z) = M_{u^R}(x,y,z)$. By the results of Section~\ref{section:cluster}, $u$ and $u^R$ are strongly Wilf equivalent. 
	
	We move now to the harder case. Let $v$ be formed from $u$ by performing a rigid shift. We will show that $v$ and $u$ are strongly Wilf equivalent. For concreteness, let $h$ be the height below which all blocks remain in place and above which all blocks shift, and let $k$ be the horizontal shift performed ($k$ is negative if blocks are shifted to the left, and positive if shifted to the right). Let $c$ be a minimal $m$-cluster of $u$ where the $m$ marked occurrences of $u$ begin at positions $1 = i_1 < i_2 < \ldots < i_m$. Define $\Phi(c)$ be the minimal $m$-cluster of $v$ with marked occurrences of $v$ beginning at the same positions $i_1, \ldots, i_m$. To complete the proof, we must show that $\Phi$ is a bijection from minimal clusters of $u$ to minimal clusters of $v$ that preserves length, sum, and number of marked occurrences.
	
	The bijectivity of $\Phi$ follows immediately after observing that $u$ and $v$ have the same length and that $c$ and $\Phi(c)$ are the unique minimal $m$-clusters of $u$ and $v$, respectively, that have marked occurrences at positions $i_1, \ldots, i_m$. It is similarly easy to see that $\Phi$ preserves length and number of marked occurrences. To establish that $\Phi$ is sum-preserving, it suffices to show that $\Phi(c)$ is itself a rigid shift of $c$. In fact, $\Phi(c)$ is obtained from $c$ by performing the same type of rigid shift performed on $u$ to obtain $v$: all blocks of $c$ above height $h$ are shifted horizontally by $k$ units. Before we verify this fact, we present a graphical example. Suppose $u = 252432122$ and $v = 222225143$. The corresponding skyline diagrams are shown below.
	\begin{center}
		\sds{2,5,2,4,3,2,1,2,2}{0.4}
		\hspace*{0.8in}
		\sds{2,2,2,2,2,5,1,4,3}{0.4}
	\end{center}
	One transforms $u$ into $v$ by rigidly shifting all blocks of height at least $3$ rightward by four units. The minimal $4$-clusters of $u$ and $v$ with marked occurrences starting at positions $1, 3, 6, 10$ are shown below.
	\begin{center}
		\sdsplus{2,5,2,5,3,4,5,2,4,3,5,2,4,3,2,1,2,2}{0.4}{
			\draw[thick, fill=black!70, opacity=0.8] (1,1)--(1,3)--(16,3)--(16,2)--(17,2)--(17,3)--(19,3)--(19,1)--cycle;
					
			\draw[thick, fill=black!30, opacity=0.8] (2,3)--(2,6)--(3,6)--(3,3)--(4,3)--(4,6)--(5,6)--(5,4)--(6,4)--(6,5)--(7,5)--(7,6)--(8,6)--(8,3)--(9,3)--(9,5)--(10,5)--(10,4)--(11,4)--(11,6)--(12,6)--(12,3)--(13,3)--(13,5)--(14,5)--(14,4)--(15,4)--(15,3)--cycle;
		}
		\quad\quad 
		\sdsplus{2,2,2,2,2,5,2,5,3,4,5,2,4,3,5,1,4,3}{0.4}{
			\draw[thick, fill=black!70, opacity=0.8] (1,1)--(1,3)--(16,3)--(16,2)--(17,2)--(17,3)--(19,3)--(19,1)--cycle;
					
			\begin{scope}[shift={(4,0)}]
				\draw[thick, fill=black!30, opacity=0.8] (2,3)--(2,6)--(3,6)--(3,3)--(4,3)--(4,6)--(5,6)--(5,4)--(6,4)--(6,5)--(7,5)--(7,6)--(8,6)--(8,3)--(9,3)--(9,5)--(10,5)--(10,4)--(11,4)--(11,6)--(12,6)--(12,2)--(13,2)--(13,5)--(14,5)--(14,4)--(15,4)--(15,3)--(13,3)--(13,2)--(12,2)--(12,3)--cycle;				
			\end{scope}
		}
	\end{center}
	The darkly-shaded boxes are those that do not shift, and they are identical between the minimal $4$-cluster of $u$ on the left and the minimal $4$-cluster of $v$ on the right. The lightly-shaded boxes correspond to those that shift to form $v$ from $u$. There is no worry that shifting blocks of the cluster will form an unconnected column (as in the rightmost skyline diagram of Figure~\ref{figure:skyline-rigid-shift-2}) because the presence of such a defect would imply that $v$ itself had the same defect.
	
	To complete the proof, we now verify algebraically that $\Phi(c)$ is obtained from $c$ by performing the same type of rigid shift performed on $u$ to obtain $v$. Since $v$ is formed from $u$ by shifting all blocks of height greater than $h$ horizontally by $k$ units, we can describe each letter of $v$ in terms of the letters of $u$ in following way:
	\[
		v_n = \min(h, u_n) + \max(0, u_{n-k}-h),
	\]	
	with the convention that $u_n=0$ if $n < 0$ or $n > |u|$. The following property follows immediately from the definition of a rigid shift.
	\begin{lemma}
		\label{lemma:shift-prop}
		If $u_n < h$, then $u_{n-k} < h$. Equivalently, if $\min(h, u_n) < h$, then $\max(0, u_{n-k}-h) = 0$.
	\end{lemma}

	Furthermore, the letters of $c$ and $\Phi(c)$ can be written as
	\[
		c_n = \max_{j=1,\ldots,m}(u_{n-i_j+1}),\qquad\text{and}\qquad\Phi(c)_n = \max_{j=1,\ldots,m}(v_{n-i_j+1}).
	\]
	
	We are ready to prove that $\Phi(c)$ is the rigid shift of $c$ at height $h$ by $k$ units.
	\begin{align*}
		\Phi(c)_n &= \max_{j=1,\ldots,m}(v_{n-i_j+1})\\	
		&= \max_{j=1,\ldots,m} \left( \min(h, u_{n-i_j+1}) + \max(0, u_{n-i_j+1-k}-h) \right)\\
		&= \max_{j=1,\ldots,m} \left( \min(h, u_{n-i_j+1}) \right) + \max_{j=1,\ldots,m}\left(\max(0, u_{n-i_j+1-k}-h)\right)\\
		&= \min\left(h, \max_{j=1,\ldots,m}(u_{n-i_j+1})\right) + \max\left(0, \max_{j=1,\ldots,m} (u_{n-i_j+1-k})-h\right)\\
		&= \min(h,c_n) + \max(0, c_{n-k}-h).
	\end{align*}
	
	The third line follows from Lemma~\ref{lemma:shift-prop}, and the last line shows that $\Phi(c)$ is the claimed rigid shift of $c$. As a consequence, $\Phi$ is sum-preserving.\end{proof}

\section{Applications of Shift Equivalence}

The notion of shift equivalence provides a uniform framework that can be used to prove many of the results of Kitaev, Liese, Remmel, and Sagan~\cite{KLRS} and of Pantone and Vatter~\cite{PV}. 

Kitaev, Liese, Remmel, and Sagan~\cite{KLRS} specifically ask for such a framework, stating the following as an open question about Wilf equivalence over words in $S_5$, i.e., those of length $5$ that contain each letter of $\{1,2,3,4,5\}$ exactly once:

\begin{quote}
	\emph{Find a theorem which, together with the results already proved, explains all the Wilf equivalences in $S_5$.}
\end{quote}

They state the following as a conjecture, subsequently proved by Pantone and Vatter~\cite{PV} by an analysis of 15 cases.

\begin{theorem}[{\cite[Conjecture 8.3]{KLRS} and \cite[Theorem 5.3]{PV}}]
	For any $a,b,c \in \P\setminus\{1\}$, the words $a1b2c$	and $a2b1c$ are Wilf equivalent.
\end{theorem}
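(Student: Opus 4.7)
The plan is to prove that $a1b2c$ and $a2b1c$ are shift equivalent; by the theorem of the previous section this immediately implies strong Wilf equivalence and hence Wilf equivalence. I would proceed by a case analysis on the relative sizes of $a$, $b$, and $c$, using at most two rigid shifts together with one reversal.

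I begin with two easy reductions. If $a = c$, then $(a1b2c)^R = a2b1c$, so the two words are already reverses of each other and hence shift equivalent. If $a > c$, I note that $(a1b2c)^R = c2b1a$ and $(a2b1c)^R = c1b2a$, so by reversing both words it suffices to prove $c1b2a \sim c2b1a$, which is an instance of the same theorem with swapped parameters satisfying $c < a$. Hence I may assume $a < c$ in what follows.

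For the main case $a < c$, I would cut the skyline diagram of $a1b2c$ at height $h = a$. Since $a \geq 2$, columns 2 and 4 (of heights 1 and 2) sit entirely on or below the cut, so any blocks above it come only from columns 3 and 5. If $b \leq a$, only column 5 carries a top, of height $c - a$, and a rigid shift by $k = -4$ moves this top onto column 1, whose base has height exactly $a$; the result is $c1b2a$, whose reversal is the target $a2b1c$. If $b > a$, both columns 3 and 5 carry tops (of heights $b - a$ and $c - a$), and a rigid shift by $k = -2$ slides them onto columns 1 and 3, producing $b1c2a$. Reversing yields $a2c1b$, and one final rigid shift swaps its third and fifth letters to reach $a2b1c$: cut at height $h = \min(b, c)$ and move the unique remaining top by $k = 2$ if $b < c$ or by $k = -2$ if $b > c$; the case $b = c$ is trivial since $a2c1b$ already equals $a2b1c$.

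The main obstacle will be verifying the validity of each rigid shift, namely that every moved top comes to rest on a base column of height exactly $h$ and does not overhang the diagram. The hypothesis $a, b, c \geq 2$ is essential to the argument: it guarantees that columns of height 1 or 2 never become inadvertent landing sites for shifts at $h \geq 2$. In each sub-case one must pin down exactly which positions lie in the base $\{n : u_n \geq h\}$ and check that the shifted set of top positions fits inside it. The most delicate check is the final shift on $a2c1b$, where I would use $a < \min(b,c)$ to conclude that the base at the chosen height consists of precisely columns 3 and 5, so that the lone moving top necessarily lands legally at the other one.
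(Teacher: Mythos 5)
Your proposal is correct and follows exactly the route the paper takes: it establishes shift equivalence of $a1b2c$ and $a2b1c$ by a case analysis on the relative sizes of $a$, $b$, and $c$, then invokes the main theorem to conclude strong (hence ordinary) Wilf equivalence. The paper leaves the case analysis to the reader with a single sentence, whereas you have supplied the explicit shifts and validity checks, all of which are correct (in particular, the observation that $a,b,c\geq 2$ keeps the height-$1$ and height-$2$ columns out of every base, and that $a<\min(b,c)$ pins down the base in the final shift).
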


One can easily see that $a1b2c$ and $a2b1c$ are shift equivalent by considering the possible relative sizes of $a$, $b$, and $c$. Therefore, they are strongly Wilf equivalent and the result follows. In fact, as noted by Pantone and Vatter~\cite{PV}, the theorem still holds by the same proof if $1$ and $2$ are replaced by arbitrary positive integers $x$ and $y$ such that $x,y \leq a,b,c$.

The next theorem was originally proved by Kitaev, Liese, Remmel, and Sagan~\cite{KLRS}, the proof requiring about a page. It follows immediately from the concept of shift equivalence.

\begin{theorem}[{\cite[Theorem 4.3]{KLRS}}]
	Let $u$, $v$, and $w$ be words over $\{1, \ldots, m\}$ and let $n \geq m$. Then, the words $umvnw$ and $unvmw$ are Wilf equivalent.
\end{theorem}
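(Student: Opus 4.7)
The plan is to show that $umvnw$ and $unvmw$ are shift equivalent; the conclusion then follows immediately from the theorem that shift equivalence implies strong Wilf equivalence, together with the fact (noted in Section~\ref{section:cluster}) that strong Wilf equivalence implies Wilf equivalence.

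The key geometric observation is that since every letter of $u$, $v$, and $w$ lies in $\{1,\ldots,m\}$, in the skyline diagram of $umvnw$ the only column of height strictly greater than $m$ is the one corresponding to the letter $n$ (of height $n \ge m$). Cut the diagram at height $h = m$. Below the cut, nothing changes. Above the cut, there is exactly one column, namely a stack of $n-m$ blocks sitting at the position of $n$. I would then rigidly shift this lone column leftward past all the columns of $v$ so that it comes to rest on top of the $m$-column, whose height is exactly $h = m$.

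This shift is legitimate: only one column is moved, and it lands on a column of height exactly $h$, meeting the definition of a rigid shift. After the shift, the column at the old position of $m$ has height $m + (n-m) = n$, while the column at the old position of $n$ has height $m$, so the resulting word is $unvmw$. Hence $umvnw$ and $unvmw$ are shift equivalent, which gives the theorem.

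The only subtlety, and the ``obstacle'' to check, is that the rigid-shift hypothesis is genuinely satisfied: one must verify that the blocks above the cut form a single column (guaranteed by the bound $v_i \le m$ on all letters of $v$) and that their landing column has height exactly $h = m$ (guaranteed by the explicit $m$ in the word). Both requirements are immediate consequences of the hypotheses $u,v,w \in \{1,\ldots,m\}^*$ and $n \ge m$, so no further case analysis is needed.
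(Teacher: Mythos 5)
Your proposal is correct and is essentially identical to the paper's proof: the paper likewise observes that $unvmw$ is obtained from $umvnw$ by rigidly shifting the $n-m$ topmost blocks of the $n$-column (the only blocks above height $m$, since $u,v,w$ have letters at most $m$) onto the $m$-column, and then invokes shift equivalence $\Rightarrow$ strong Wilf equivalence $\Rightarrow$ Wilf equivalence. Your explicit verification of the rigid-shift hypotheses is a slightly more careful write-up of the same one-line argument.
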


The words $unvmw$ can be created from the word $umvnw$ by shifting the $n-m$ topmost blocks of the column of height $n$ to the column of height $m$. Therefore, the words are strongly Wilf equivalent.

Other results follow with slight modifications, such as replacing the assumption ``if $u$ and $v$ are Wilf equivalent'' to the assumption ``if $u$ and $v$ are shift equivalent.'' We present just one example here. A word $u$ is said to be \emph{weakly increasing} (resp., \emph{weakly decreasing}) if $u_i \geq u_{i-1}$ (resp., $u_i \leq u_{i-1}$) for $2 \leq i \leq |u|$. 

\begin{theorem}[{Analogue of~\cite[Corollary 4.2]{KLRS}}]
	Let $y$ and $y'$ be weakly increasing words and $z$ and $z'$ be weakly decreasing words such that $yz$ is a rearrangement of $y'z'$. Let $m$ be the maximum letter in $yz$. If $u$ and $v$ are shift equivalent words with no letter less than $m$, then $yuz$ and $y'vz'$ are strongly Wilf equivalent.
\end{theorem}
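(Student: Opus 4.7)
The plan is to prove that $yuz$ and $y'vz'$ are shift equivalent, from which strong Wilf equivalence follows immediately. I would factor this through the intermediate word $y'uz'$ via two claims: (a) $yuz$ is shift equivalent to $y'uz'$, and (b) $y'uz'$ is shift equivalent to $y'vz'$.

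For claim (a), the skylines of $yuz$ and $y'uz'$ have the same layered structure, differing only in horizontal position. Because $y$ and $y'$ are weakly increasing, $z$ and $z'$ are weakly decreasing, and every letter of $u$ is at least $m$, the set of positions with letter $\geq h$ forms a single contiguous range at each height $h$: at heights $h \leq m$ it consists of a suffix of the $y$-region, all of the $u$-region, and a prefix of the $z$-region, while at heights $h > m$ it lies inside the $u$-region. Since $yz$ and $y'z'$ are rearrangements, corresponding layers in $yuz$ and $y'uz'$ have the same size. I would realize the transformation as a sequence of rigid shifts performed at heights $h = 1, 2, \ldots, m$, where the shift at height $h$ has horizontal displacement $k_h = \#\{i : y'_i = h\} - \#\{i : y_i = h\}$, so that the cumulative effect aligns each layer with its target. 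Each shift is valid because the number of positions carrying letter exactly $h$ flanking layer $h+1$ equals $\#\{i : y_i = h\}$ on the left and $\#\{i : z_i = h\}$ on the right in the original $yuz$, this count is invariant under shifts performed at lower heights, and the rearrangement hypothesis forces $|k_h|$ to be bounded by this supply.

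For claim (b), since $u$ and $v$ both have all letters $\geq m$, I would argue that their shift equivalence can be realized by a path whose intermediate words also have every letter $\geq m$; in such a path every rigid shift must occur at a height $h \geq m$, since a nontrivial shift at height $h < m$ applied to a word with all letters $\geq m$ would produce a letter below $m$. A rigid shift at height $h \geq m$ in $u$ lifts verbatim to a shift of $y'uz'$ at the same height, because in $y'uz'$ the blocks above height $h$ lie entirely inside the $u$-region (the letters of $y'$ and $z'$ being $\leq m \leq h$). A reversal step $u \to u^R$ is handled indirectly: the ambient reversal of $y'uz'$ yields $(z')^R u^R (y')^R$, a word with exactly the structure required by claim (a) (with $(z')^R$ weakly increasing and $(y')^R$ weakly decreasing), so a second application of claim (a) transports us to $y' u^R z'$.

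The main obstacle lies in claim (b): establishing that every shift equivalence between $u$ and $v$ can be realized by a path staying inside the ``letters $\geq m$'' class. The full shift-equivalence class guarantees some path exists, but it may transit through words with letters smaller than $m$, so restricting to the ``letters $\geq m$'' subclass and verifying that connectivity is preserved requires separate argument, likely by analyzing the connected components of the restricted shift graph on these words. If this proves elusive, a fallback is to bypass the path construction entirely by showing directly that the minimal cluster generating function $M_{y'uz'}$ depends on $u$ only through $M_u$, so that $M_u = M_v$ (a consequence of the theorem that shift equivalence implies strong Wilf equivalence) yields $M_{y'uz'} = M_{y'vz'}$ at once.
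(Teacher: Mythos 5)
The paper offers no proof of this theorem at all---it is stated as one of the results that ``follow with slight modifications'' from the shift-equivalence framework---so there is nothing to compare line by line. Your route, proving that $yuz$ and $y'vz'$ are actually shift equivalent via the intermediate word $y'uz'$, is surely the intended argument, and both of your claims are correct. Claim (a) works exactly as you describe: for $h\le m$ the layer at height $h$ is a contiguous interval (a suffix of the $y$-region, all of $u$, a prefix of the $z$-region), the rearrangement hypothesis gives equal layer widths, and the shifts $k_h=\#\{i:y'_i=h\}-\#\{i:y_i=h\}$ at heights $1,\dots,m$ are each valid because after the shifts at lower heights both layer $h$ and the shifted layer $h+1$ sit at their positions in $y'uz'$, where they are nested. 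The lift of a shift at height $h\ge m$ from $u$ to $y'uz'$ and the treatment of reversal by reversing the ambient word and reapplying claim (a) to $(z')^Ru^R(y')^R$ are also fine.

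The one thing to fix is that what you call ``the main obstacle'' in claim (b) is not an obstacle: no separate connectivity analysis of a restricted shift graph is needed, because the set of words with all letters at least $m$ is \emph{closed} under the generating moves. Reversal obviously preserves the multiset of letters. For a rigid shift of a word $w$ with all letters $\ge m$: if the cut height $h$ were less than $m$ and $k\ne 0$, then \emph{every} column would have blocks above the cut, and the moved column at an extreme position would land outside $\{1,\dots,|w|\}$, where there is no stationary column of height exactly $h$---so such a shift is simply invalid, not merely undesirable. Hence every nontrivial shift applied to such a word has $h\ge m$, and its output has letters $\min(h,w_n)+\max(0,w_{n-k}-h)\ge\min(h,m)= m$. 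By induction along any sequence of moves taking $u$ to $v$, every intermediate word has all letters $\ge m$ and every shift occurs at height $\ge m$, which is exactly the hypothesis your lifting argument requires. With that observation your proof is complete, and the fallback via $M_{y'uz'}$ is unnecessary.
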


\section{Open Questions}

Shift equivalence and strong Wilf equivalence are not the same relation. Using the computational techniques of Kitaev, Liese, Remmel, and Sagan~\cite{KLRS}, appropriately adapted to calculate the trivariate generating function $A_u(x,y,z)$ rather than the bivariate $A_u(x,y,0)$, we can find examples of pairs of words that are strongly Wilf equivalent but not shift equivalent.

Examples of least length are found among the permutations of $\{1, 2, 3, 4, 5, 6\}$. Here there are three strong Wilf equivalence classes that each split into two shift equivalence classes: each of the pairs $(234156, 256143)$, $(235146, 254163)$, and $(235164, 245163)$ consists of words that are strongly Wilf equivalent but not shift equivalent. The unique pair of strongly Wilf equivalent but not shift equivalent words whose digits sum to $14$ is $(223133, 233132)$, and there is no such pair with a smaller sum. 

There may be more geometric relations, like shift equivalence, that imply strong Wilf equivalence and can explain the examples above. Ideally, one would like a small set of such geometric relations that together identify all strongly Wilf equivalent pairs. 

One may also ask how many strong Wilf equivalence classes there are among the permutations $\{1, \ldots, n\}$. Computation shows that this sequence starts $\{1, 1, 2, 5, 21, 126, 922\}$, while the counting sequence for the number of shift equivalence classes of $\{1, \ldots, n\}$ starts $\{1, 1, 2, 5, 21, 129, 931\}$. The highly structured nature of shift equivalence could lead to exact enumeration of the latter sequence, providing an upper bound for the former.
%
%
%
%It would be ideal if shift Wilf equivalence and Wilf equivalence were the same, but this is not the case. For instance, computational work shows that $2322313222 \sim 2222313223$, but these are not shift Wilf equivalent words. In fact, if we limit ourselves to permutations, then shift Wilf equivalence and Wilf equivalence are the same only up through the fifth symmetric group. Moving into $\mathcal{S}_6$, we have that $234156 \sim 256143$, but they are not shift Wilf equivalent. Both of these examples share a common trait with all known examples of words which are Wilf equivalent but not shift Wilf equivalent, namely, when looking at the board diagrams of these words, at least one large ``valley" appears in the middle of the diagram. It would be nice if one could find another geometric operation to add to the notion of shift equivalence in order bridge this gap (no pun intended) between these two notions of equivalence. In particular, if one could find an operation that either can or cannot be performed on a board diagram, then Conjectures~\ref{conj:rearrange} and \ref{conj:2n} may eventually become theorems.
%
%
%
%

\end{document}